\documentclass[a4paper,12pt]{article}
\usepackage[centertags]{amsmath}
\usepackage{amsfonts}
\usepackage{amssymb}
\usepackage{amsthm}
\usepackage{dsfont}
\usepackage{tikz, subfigure}
\usepackage{verbatim}

\setlength{\parskip}{0.3cm} \setlength{\parindent}{0cm}
\newtheorem{theorem}{Theorem}[section]
\newtheorem{lemma}{Lemma}[section]

\newtheorem{defn}{Definition}[section]

\begin{document}
\title{xxxx}
\date{\today}
 \title{On the Invariant Density of the Random $\beta$-Transformation}
\author{Tom Kempton}
\maketitle
\begin{abstract}
\noindent We construct a Lebesgue measure preserving natural extension of the random $\beta$-transformation $K_{\beta}$. This allows us to give a formula for the density of the absolutely continuous invariant probability measure of $K_{\beta}$, answering a question of Dajani and de Vries, and also to evaluate some estimates on the typical branching rate of the set of $\beta$-expansions of a real number. 
\end{abstract}
\section{Introduction}
Given real numbers $\beta>1$ and $x\in I_{\beta}:=\left[0,\frac{\lfloor\beta\rfloor}{\beta-1}\right]$, a $\beta$-expansion of $x$ is a sequence $(a_i)_{i=1}^{\infty} \in \{0,\cdots,\lfloor\beta\rfloor\}^{\mathbb N}$ such that
\[
x=\sum_{i=1}^{\infty}a_i\beta^{-i}.
\]
For $\beta>1$ and $x\in I_{\beta}$ we let $\mathcal E_{\beta}(x)$ be the set of $\beta$-expansions of $x$. The study of $\beta$-expansions goes back to Renyi \cite{Renyi} and Parry \cite{Parry}, who were interested in the properties of the lexicographically largest $\beta$-expansion of $x$, known as the greedy $\beta$-expansion. It was shown that the greedy expansion $(a_i)_{i=1}^{\infty}$ of $x\in[0,1]$ can be generated by defining $T(x)=\beta x$ (mod $1$) and letting $a_i=k$ whenever $T^{i-1}(x)\in\left[\frac{k-1}{\beta},\frac{k}{\beta}\right).$ Furthermore, it was shown that $T$ preserves an absolutely contiuous probability measure which one can use in the study the ergodic properties of typical greedy $\beta$-expansions.

More recently, several authors have studied the set $\mathcal E_{\beta}(x)$ of all $\beta$-expansions of $x$. There has been substantial interest in understanding the cardinality of $\mathcal E_{\beta}(x)$ and in giving conditions under which the $\beta$-expansion of $x$ is unique. Typically $\mathcal E_{\beta}(x)$ is uncountable, see \cite{Sidorov1}, and in that case it is interesting to study the branching rate of $\mathcal E_{\beta}(x)$, which is the growth rate of the number of words $a_1\cdots a_n$ which can be continued to give $\beta$-expansions of a given $x$. In \cite{DKRandom} Dajani and Kraaikamp introduced the random $\beta$-transformation $K_{\beta}$, which allows one to generate $\mathcal E_{\beta}(x)$ dynamically, and this has allowed for a very successful dynamical approach to the study of $\mathcal E_{\beta}(x)$, see for example \cite{BakerGolden,BakerGrowth,DdV,DKRandom,FengSidorov,CountingBeta,Sidorovlmt}. 

The ergodic theory of $K_{\beta}$ was investigated in \cite{DdV1} and \cite{DdV}, where two natural invariant measures were found. Links between the measure of maximal entropy $\hat{\nu}_{\beta}$ described in \cite{DdV1}, counting $\beta$-expansions and the question of absolute continuity Bernoulli convolutions provide some motivation for this work and are explained in the next section. However our main focus is on the absolutely continuous invariant measure $\hat{\mu}_{\beta}$ of $K_{\beta}$ which was described by Dajani and de Vries in \cite{DdV}. They gave a formula for the density of $\hat{\mu}_{\beta}$ in some special cases. In this article we build a natural extension of the system $(K_{\beta},\hat{\mu}_{\beta})$, which allows us to recover a formula for the density of $\hat{\mu}_{\beta}$ in the general case, providing a solution to one of the open problems stated in \cite{DdV}. 


In section $2$ we define the random $\beta$-transformation and give the formula for the density of $\hat{\mu}_{\beta}$. In section \ref{greedysection} we recall the natural extension of the greedy $\beta$-transformation which serves as our starting point. We generalise this natural extension of the greedy $\beta$-transformation in section 4 to build a tower and a dynamical system, but for some technical reasons this tower does not serve as a natural extension of $(K_{\beta},\hat{\mu}_{\beta})$. Finally in section 5 we adapt our construction from section 4 to build our natural extension.

\subsection{Bernoulli Convolutions and Counting $\beta$-expansions}
In addition to gaining a better understanding of the random $\beta$-transformation, our work allows us to draw conclusions for typical $x$ about the set $\mathcal E_{\beta}(x)$ of $\beta$-expansions of $x$. In \cite{CountingBeta} we gave a lower bound for the typical branching rate (or equivalently the Hausdorff dimension) of the set $\mathcal E_{\beta}(x)$ in terms of $\hat{\mu}_{\beta}$, using the formula for the density of $\hat{\mu}_{\beta}$ obtained in this article we can make this lower bound explicit. This in turn is relevant to the study of Bernoulli convolutions.


Bernoulli convolutions are self similar measures with overlaps. Given $\beta\in(1,2)$ we define $\pi_{\beta}:\{0,1\}^{\mathbb N}\to I_{\beta}$ by
\[
\pi_{\beta}(\underline a)=\sum_{i=1}^{\infty}a_i\beta^{-i}.
\]
The Bernoulli convolution is the probability measure on $I_{\beta}$ defined by
\[
\nu_{\beta}=m\circ\pi_{\beta}^{-1}
\]
where $m$ is the $(\frac{1}{2},\frac{1}{2})$ Bernoulli measure on $\{0,1\}^{\mathbb N}$. It is a difficult open question to determine the parameters $\beta$ for which $\nu_{\beta}$ is absolutely continuous, for a review see \cite{SolomyakSixty}. The measure of maximal entropy $\hat{\nu}_{\beta}$ of $K_{\beta}$ projects to the Bernoulli convolution $\nu_{\beta}$ on its second coordinate\footnote{In this work measures $\hat{\mu}_{\beta}, \hat{\nu}_{\beta}$ are two dimensional and supported on the domain of $K_{\beta}$, whereas $\mu_{\beta}$ and $\nu_{\beta}$ denote the projection of $\hat{\mu}_{\beta}, \hat{\nu}_{\beta}$ onto the second coordinate.}.



In \cite{JordanShmerkinSolomyak} the sets $\mathcal E_{\beta}(x)$ were used in the multifractal analysis of Bernoulli convolutions. Furthermore, in \cite{CountingBeta} we gave sufficient conditions for the absolute continuity of Bernoulli convolutions in terms of some counting questions relating to $\mathcal E_{\beta}(x)$. It is perhaps unsurprising that the nature of the Bernoulli convolution is given by the typical properties of the sets $\mathcal E_{\beta}(x)$, since $\nu_{\beta}$ is a projection of the measure $m$ by $\pi_{\beta}$, and the sets $\mathcal E_{\beta}(x)$ are just the preimages $\pi_{\beta}^{-1}(x)$ of points $x\in I_{\beta}$. What is more intriguing however is the idea that one can study the branching rate of $\mathcal E_{\beta}(x)$, and hence the question of the absolute continuity of $\nu_{\beta}$, without studying the difficult measures $\nu_{\beta}$ or $\hat{\nu}_{\beta}$ directly but instead through the ergodic theory of the system $(K_{\beta},\hat{\mu}_{\beta})$.

This article constitutes a first step in this direction, by giving a formula for the density of $\hat{\mu}_{\beta}$ one can make explicit a lower bound given in \cite{CountingBeta} on the branching rate of $\mathcal E_{\beta}(x)$. Since this lower bound is not sharp, we are unable to answer the question of whether any given Bernoulli convolution is absolutely continuous. However one may hope that a more subtle analysis of the branching rate of $\mathcal E_{\beta}(x)$ in terms of the ergodic theory of $(K_{\beta},\hat{\mu}_{\beta})$, coupled with the description of $\hat{\mu}_{\beta}$ given in this article, may give progress in this direction. This is discussed in the final section.



\section{The Random $\beta$-transformation}
Since we are motivated by the study of Bernoulli convolutions $\nu_{\beta}$ associated to $\beta\in(1,2)$, we restrict our study of the natural extension of $K_{\beta}$ to the case $\beta\in(1,2)$. The extension to general $\beta>1$ is straightforward, although the notation involved is more complicated.

We partition the interval $\left[0,\frac{1}{\beta-1}\right]$ into the sets \[L=\left[0,\frac{1}{\beta}\right), S=\left[\frac{1}{\beta},\frac{1}{\beta(\beta-1)}\right]\text{ and }R=\left(\frac{1}{\beta(\beta-1)},\frac{1}{\beta-1}\right].\] We let $T_0,T_1:\mathbb R\rightarrow\mathbb R$ be given by $T_0(x)=\beta x$ and $ T_1(x)=\beta x -1$ and let $\Omega=\{0,1\}^{\mathbb N}$. The random $\beta$-transformation  $K_{\beta}:\Omega\times[0,\frac{1}{\beta-1}]\to\Omega\times[0,\frac{1}{\beta-1}]$ is defined by 

\[
K_{\beta}(\omega,x)=\left\lbrace\begin{array}{c c}(\omega,T_0(x))& x \in L\\ (\sigma(\omega),T_{\omega_1}(x))& x \in S\\ (\omega,T_1(x)) & x \in R\end{array}\right.
\]
where $\omega=(\omega_i)_{i=1}^{\infty}$. Given a pair $(\omega,x)$, we generate a sequence $(x_n)_{n=1}^{\infty}$ by iterating $K_{\beta}(\omega,x)$. If the $n$th iteration of $K_{\beta}(\omega,x)$ applies $T_0$ to the first coordinate we put $x_n=0$, if it applies $T_1$ to the first coordinate we put $x_n=1$. The sequence $(x_n)_{n=1}^{\infty}$ is a $\beta$-expansion of $x$. Each $\beta$-expansion of $x$ can be generated by this algorithm with some choice of $\omega$, and for typical $x$ each different choice of $\omega$ corresponds to a different $\beta$-expansion of $x$.

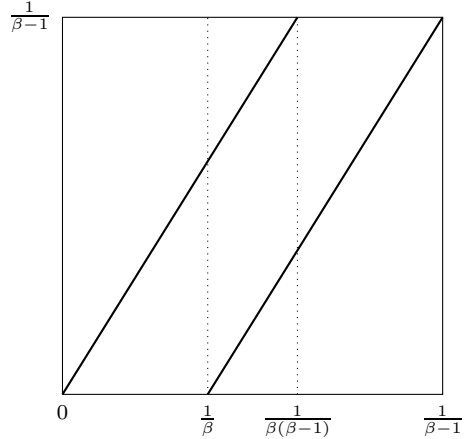
\begin{figure}[h]
\centering
\begin{tikzpicture}[scale=5]
\draw(0,0)node[below]{\scriptsize 0}--(.382,0)node[below]{\scriptsize$\frac{1}{\beta}$}--(.618,0)node[below]{\scriptsize$\frac{1}{\beta(\beta-1)}$}--(1,0)node[below]{\scriptsize$\frac{1}{\beta-1}$}--(1,1)--(0,1)node[left]{\scriptsize$\frac{1}{\beta-1}$}--(0,.5)--(0,0);
\draw[dotted](.382,0)--(.382,1)(0.618,0)--(0.618,1);
\draw[thick](0,0)--(0.618,1)(.382,0)--(1,1);
\end{tikzpicture}\caption{The projection onto the second coordinate of $K_{\beta}$ for $\beta=\dfrac{1+\sqrt{5}}{2}$}
\end{figure}

In \cite{DdV}, Dajani and de Vries showed that $K_{\beta}$ has an invariant probability measure $\hat{\mu}_{\beta}=m_{\frac{1}{2}}\times\mu_{\beta}$, where $\mu_{\beta}$ is absolutely continuous with respect to Lebesgue measure and $m_{\frac{1}{2}}$ is the $\left(\frac{1}{2},\frac{1}{2}\right)$ Bernoulli measure on $\Omega$. They also showed that $K_{\beta}$ is ergodic with respect to this measure.\footnote{In fact Dajani and de Vries also proved the existence of invariant probability measures $\hat{\mu}_{\beta,p}=m_p\times\mu_{\beta,p}$ where $\mu_{\beta,p}$ is absolutely continuous and $m_p$ is the $(p,1-p)$ Bernoulli measure on $\Omega$. In this article we deal only with the unbiased case $p=\frac{1}{2}$.}

Many properties of $K_{\beta}$ can be studied using the related skew product transformation $R_{\beta}$. We define $R_{\beta}:\Omega\times[0,\frac{1}{\beta-1}]\to\Omega\times[0,\frac{1}{\beta-1}]$ by
\[
R_{\beta}(\underline{\omega},x)=\left\lbrace \begin{array}{cc}(\sigma(\underline{\omega}),T_0 (x))& x\in L\\ (\sigma(\underline{\omega}),T_{\omega_1}(x))& x\in S\\ (\sigma(\underline{\omega}),T_1(x)) & x\in R\end{array} \right. .
\]
In particular, the measure $\hat{\mu}_{\beta}$ is invariant under $R_{\beta}$. We build a natural extension for the system $(\Omega\times I_{\beta},R_{\beta}, \hat{\mu}_{\beta})$, this can be easily translated to a natural extension for $K_{\beta}$ by changing when one shifts in the first coordinate, but we present the case of $R_{\beta}$ as the notation is easier and it gives us the same information about $\hat{\mu}_{\beta}$.

We will often be interested in the second coordinate of $R_{\beta}^n(\omega,x)$. We introduce the shorthand $\pi_2(\omega,x):=x$, $R_{\beta,\omega}(x):=\pi_2(R_{\beta}(\omega,x))$. Since $R_{\beta,\omega}^n(x)$ depends only on the first $n$ coordinates of $\omega$, we sometimes write $R_{\beta,\omega_1\cdots\omega_n}^n(x)$

We recall that Parry \cite{Parry} proved that the absolutely continuous invariant measure of the map $T(x)=\beta x$ (mod 1) has density proportional to
\[
d(x)=\sum_{n=0}^{\infty} \frac{1}{\beta^{n}}\chi_{[0,T^n(1)]}(x).
\]

The importance of the orbit of $1$ in determining the invariant measure for $T$ is due to the fact that $1$ is the limit of $T(x)$ as $x$ approaches $\frac{1}{\beta}$ from below, and $\frac{1}{\beta}$ is the point of discontinuity for the system. $R_{\beta}$ is discontinuous when $x=\frac{1}{\beta}$ and $x=\frac{1}{\beta(\beta-1)}$, and so one may expect that the orbits of $1$ and $\frac{1}{\beta-1}-1$ may play a similar role in determining the invariant density for $R_{\beta}$. Furthermore, since the points $1$ and $\frac{1}{\beta-1}-1$ have (typically) uncountably many orbits associated to different choices of $\omega\in\Omega$, we should expect each of these different orbits to have some role in determining $\hat{\mu}_{\beta}$. 

The following theorem confirms this intuition; the invariant density for $R_{\beta}$ can be obtained by modifying the formula of Parry to take in to account orbits of the point $\frac{1}{\beta-1}-1$ and allowing for different orbits corresponding to different choices of $\omega$.
\begin{theorem}\label{densitytheorem}
The density of $\mu_{\beta}$ is given by
\[
\rho_{\beta}(x)=C\sum_{n=0}^{\infty}\frac{1}{(2\beta)^n}\left(\sum_{\omega_1\cdots\omega_n\in\{0,1\}^n} \chi_{[0,R_{\beta,\omega_1\cdots\omega_n}^n(1)]}(x)+\chi_{[R_{\beta,\omega_1\cdots\omega_n}^n(\frac{1}{\beta-1}-1),\frac{1}{\beta-1}]}(x)\right).
\]
where $C$ is just a normalising constant to make $\mu_{\beta}$ a probability measure.
\end{theorem}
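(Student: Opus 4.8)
The plan is to read the density off the natural extension constructed in Sections 3--5, in exact analogy with the way Parry's formula $d(x)$ for the greedy map $T$ is recovered by projecting Lebesgue measure on the classical natural extension of $T$ onto its first coordinate. Write the natural extension as $(\bar D,\bar R_\beta,\bar\mu)$ with factor map $\pi$ onto $(\Omega\times I_\beta,R_\beta)$; by construction $\bar\mu$ is the product of the $(\tfrac12,\tfrac12)$ Bernoulli measure on the symbolic coordinates with a normalisation $C$ of Lebesgue measure on the Euclidean part of $\bar D$, its Euclidean fibres are bounded, and the construction is set up so that $\pi$ is measure preserving from $(\bar D,\bar R_\beta,\bar\mu)$ onto $(\Omega\times I_\beta,R_\beta,\hat\mu_\beta)$ (equivalently, one checks that $\bar\mu\circ\pi^{-1}$ is an absolutely continuous invariant probability measure and invokes uniqueness of such a measure to identify it with $\hat\mu_\beta$). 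Since $\hat\mu_\beta=m_{1/2}\times\mu_\beta$, Fubini's theorem then gives that $\rho_\beta(x)$ is, up to the constant $C$, the $\bar\mu$-measure of the fibre $\pi^{-1}\big(\{(\omega,x)\}\big)$, which by the product structure is independent of $\omega$ and equals the Bernoulli-weighted Euclidean length of the slice of $\bar D$ lying over the second-coordinate value $x$. The whole content of the theorem is thus the explicit description of that slice.

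To obtain it I would describe $\bar D$ as a ``staircase''. Recall from Section 3 that in the greedy case the slice over $x$ has length $d(x)=\sum_{n\ge0}\beta^{-n}\chi_{[0,T^n(1)]}(x)$: the $n=0$ term is the base and each orbit point $T^n(1)$ cuts a step of height $\beta^{-n}$. Here $R_\beta$ has two discontinuities, at $x=\tfrac1\beta$ and $x=\tfrac1{\beta(\beta-1)}$, with forward images $T_0(\tfrac1\beta)=1$ and $T_1\big(\tfrac1{\beta(\beta-1)}\big)=\tfrac1{\beta-1}-1$; these are the points whose (now random) orbits cut the steps of $\bar D$. One family of steps rises from the bottom edge $x=0$ and is governed by the orbits of $1$; the reflection $x\mapsto\tfrac1{\beta-1}-x$, which interchanges $T_0\leftrightarrow T_1$ and $L\leftrightarrow R$ and fixes $S$ (and leaves $\hat\mu_\beta$ invariant), produces a mirror family descending from the top edge $x=\tfrac1{\beta-1}$ and governed by the orbits of $\tfrac1{\beta-1}-1$. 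A step reached after $n$ backward steps carries weight $(2\beta)^{-n}$: the factor $\beta^{-1}$ per step because the inverse branches of $T_0,T_1$ have slope $\beta^{-1}$, and the factor $\tfrac12$ per step because a step passing through the random region $S$ splits into two symbolic copies, each carrying Bernoulli weight $\tfrac12$. Summing over $n$ and over the $2^n$ words $\omega_1\cdots\omega_n$, the orbit of $1$ contributes $\sum_n(2\beta)^{-n}\sum_{\omega_1\cdots\omega_n}\chi_{[0,R^n_{\beta,\omega_1\cdots\omega_n}(1)]}(x)$ and the orbit of $\tfrac1{\beta-1}-1$ the reflected sum, which together give the claimed $\rho_\beta$. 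That the two families are the ``lower'' and ``upper'' halves of the same staircase is visible from $\chi_{[R^n_{\beta,\omega_1\cdots\omega_n}(\frac1{\beta-1}-1),\frac1{\beta-1}]}(x)=\chi_{[0,\,R^n_{\beta,\bar\omega_1\cdots\bar\omega_n}(1)]}\big(\tfrac1{\beta-1}-x\big)$ (with $\bar\omega$ the flipped word), which also exhibits $\rho_\beta$ as symmetric under the reflection, as it must be.

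The supporting verifications are routine. First, $\bar R_\beta$ preserves $\bar\mu$: a Jacobian computation of the kind standard for staircase natural extensions, the expansion by $\beta$ in the Euclidean coordinate being balanced against a two-to-one identification in the symbolic coordinate. Second, $\bar R_\beta$ is invertible $\bar\mu$-a.e.\ and the coordinate $\sigma$-algebra is generating under $\bar R_\beta^{-1}$, so that $(\bar D,\bar R_\beta,\bar\mu)$ really is a natural extension (for the density formula alone this minimality is not needed). Third, $\bar\mu$ is finite --- and $C$ is positive and finite --- because the total Euclidean mass at level $n$ is at most $2^n\cdot(2\beta)^{-n}\cdot\tfrac{2}{\beta-1}=\tfrac{2}{\beta-1}\beta^{-n}$, a convergent geometric series since $\beta>1$.

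The real obstacle is not this bookkeeping but the construction of $\bar D$ and the proof that $\bar R_\beta$ is a genuine natural extension --- exactly where the naive tower of Section 4 breaks down. The delicacy is concentrated over the middle region $S$, where the dynamics really branches: one must fit the Euclidean fibre together with the additional symbolic data so that the ``up'' staircase (orbit of $1$) and the ``down'' staircase (orbit of $\tfrac1{\beta-1}-1$) assemble into a region on which $\bar R_\beta$ is injective and from which no past symbol and no past digit is lost. Once $\bar D$ and $\pi$ are in place, the formula for $\rho_\beta$ falls out of the slicing argument above.
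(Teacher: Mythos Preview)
Your plan is exactly the paper's: build a Lebesgue-measure-preserving natural extension whose Euclidean part is a double staircase---one family of steps indexed by the random orbits of $1$ and a reflected family indexed by the random orbits of $\tfrac{1}{\beta-1}-1$, each step at level $n$ of height $(2\beta)^{-n}$---and then read off $\rho_\beta$ by integrating the indicator of the tower along the fibre over $x$. You also locate the only nontrivial point correctly: the naive single tower does not project to $R_\beta$ over $S$ when $r(\omega_1\cdots\omega_n)\in R$ and $\omega_{n+1}=0$, and the paper's fix is precisely what you describe in words---it takes the mirror tower $\overline E=P(E)$ and introduces an explicit swap $Q$ that exchanges the offending strip $\psi(F_{\omega_1\cdots\omega_n})\subset E$ with its reflected counterpart in $\overline E$, after which $\tilde\psi=Q\circ\psi$ genuinely factors onto $R_\beta$; your outline defers this mechanism but does not misidentify it.
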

The exponential decay in the summand allows us to estimate the density with explicit error bounds. The proof of Theorem \ref{densitytheorem} is done via the construction of a natural extension of $R_{\beta}$, which occupies the majority of this article.


\section{The natural extension of the greedy map}\label{greedysection}
Our method is reminiscent of the natural extension of the greedy $\beta$-transformation given by Dajani, Kraaikamp and Solomyak \cite{DKS}, see also \cite{DajaniKalleDeleted} for a related construction on greedy $\beta$-transformations with deleted digits. We begin by recalling the approach of \cite{DKS}. The authors built a tower as a natural extension of the map $T(x)=\beta x $ (mod 1) on $[0,1]$ and let the $n$th level of the tower be given by
\[
X_n:=[0,T^n(1)]\times\left[\sum_{i=0}^{n-1}\beta^{-i},\sum_{i=0}^{n}\beta^{-i}\right)
\]
with $X_0=[0,1]\times[0,1]$. The levels of the tower stack neatly on top of each other. The domain $X$ of the natural extension is given by $X=\cup_{n=0}^{\infty} X_n$, and the transformation is defined in terms of the orbit of $1$.

When $T^{n+1}(1)=\beta T^n(1)$, $X_n$ is mapped bijectively onto $X_{n+1}$ by $(x,y)\to (\beta x, \frac{y}{\beta}+1)$.

When $T^{n+1}(1)=\beta T^n(1)-1$, $X_n$ is split into two, the set $\{(x,y)\in X_n:x\geq \frac{1}{\beta}\}$ is mapped bijectively onto $X_{n+1}$ by $(x,y)\to\left(\beta x-1,\frac{y}{\beta}+1\right)$.

The set $\{(x,y)\in X_n:x\in[0,\frac{1}{\beta})\}$ is mapped to a horizontal strip $X_0^n$ across $X_0$ of width $1$ and height $\frac{1}{\beta^{n+1}}$. This happens by applying $T_0$ to the first coordinate, dividing by $\beta$ in the second coordinate, and translating the second coordinate so that $X_0^n$ lies exactly on top of the image of $X_0^m$, where $m$ is the greatest integer less than $n$ for which $T^{m+1}(1)=\beta T^m(1)-1$. The first few levels of the tower for $\beta=1.25$ are given in Figure 2.

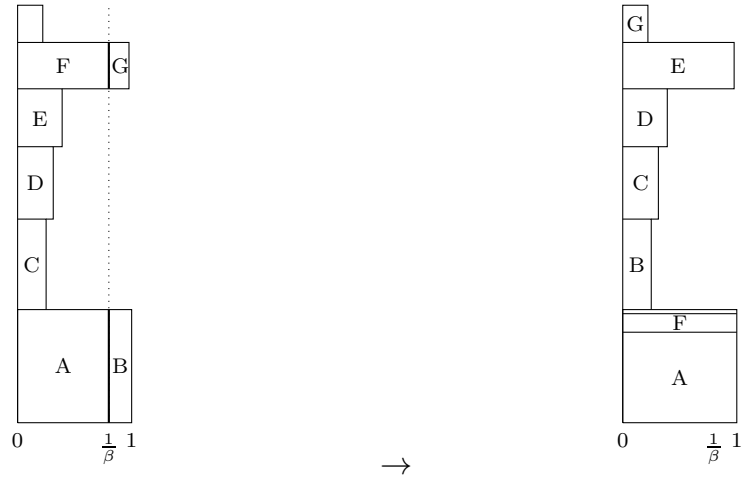
\begin{figure}[ht]\label{greedytower}
\begin{minipage}[b]{0.5\linewidth}
\centering
\begin{tikzpicture}[scale=1.5]
\draw(0,0)node[below]{\scriptsize 0}--(0.8,0)node[below]{\scriptsize$\frac{1}{\beta}$}--(1,0)node[below]{\scriptsize$1$}--(1,1)--(0,1)--(0,0);
\draw(.25,1)--(.25,1.8)--(0,1.8);
\draw(.25,1.8)--(.3125,1.8)--(0.3125,2.44)--(0,2.44);
\draw(.3125,2.44)--(.3906,2.44)--(0.3906,2.952)--(0,2.952);
\draw(0.3906,2.952)--(0.9766,2.952)--(.9766,3.3616)--(0,3.3616);
\draw(.2207,3.3616)--(.2207,3.6906)--(0,3.6906)--(0,0);
\draw[dotted](0.8,0)--(0.8,3.6906);
\draw[thick](0.8,0)--(0.8,1) (0.8,2.952)--(0.8,3.3616);
\draw(0.4,0.5)node{\scriptsize A} (0.9,0.5)node{\scriptsize B} (0.125,1.4)node{\scriptsize C} (0.15625,2.12)node{\scriptsize D} (0.1953,2.69)node{\scriptsize E} (0.4,3.1568)node{\scriptsize F} (0.9,3.1568)node{\scriptsize G};
\end{tikzpicture}
\end{minipage}
\hspace{0.5cm}$\to$
\begin{minipage}[b]{0.5\linewidth}
\centering
\begin{tikzpicture}[scale=1.5]
\draw(0,0)node[below]{\scriptsize 0}--(0.8,0)node[below]{\scriptsize$\frac{1}{\beta}$}--(1,0)node[below]{\scriptsize$1$}--(1,1)--(0,1)--(0,0);
\draw(.25,1)--(.25,1.8)--(0,1.8);
\draw(.25,1.8)--(.3125,1.8)--(0.3125,2.44)--(0,2.44);
\draw(.3125,2.44)--(.3906,2.44)--(0.3906,2.952)--(0,2.952);
\draw(0.3906,2.952)--(0.9766,2.952)--(.9766,3.3616)--(0,3.3616);
\draw(.2207,3.3616)--(.2207,3.6906)--(0,3.6906)--(0,0);
\draw(0,0.8)--(1,0.8) (0,0.9638)--(1,0.9638);
\draw(0.5,0.4)node{\scriptsize A} (0.5,0.8819)node{\scriptsize F} (0.125,1.4)node{\scriptsize B} (0.15625,2.12)node{\scriptsize C} (0.1953,2.69)node{\scriptsize D} (0.4833,3.1568)node{\scriptsize E} (0.1103,3.52608)node{\scriptsize G};
\end{tikzpicture}
\end{minipage}\caption{The first few levels of the natural extension of the $\beta$ transformation for $\beta=1.25$, rectangles in the diagram on the left are mapped to the rectangle with corresponding label in the diagram on the right.}
\end{figure}


The above system is a natural extension of the system $([0,1],T,\mu_P)$ where $\mu_P$ is the absolutely continuous invariant measure. Since the natural extension preserves Lebesgue measure, we can recover the formula of Parry for the density of $\mu_P$ by projecting Lebesgue measure on the tower $X$ down to the unit interval.

\section{A Tower for the Random $\beta$-transformation}
Following \cite{DKS}, we build a tower and a dynamical system related to $R_{\beta}$ using the orbit of $1$. We begin by generalising the method of \cite{DKS} directly to the case that the orbit of $1$ may depend on $\omega$. In fact, a modification will be needed in order to make this dynamical system a natural extension of $R_{\beta}$, this is deferred until the final section.

For a typical $\beta\in(1,2)$ there is not a single orbit of $1$ under the random $\beta$-transformation but uncountably many orbits associated to $(\omega,1)$ for different $\omega$. Consequently we have to split the $n$th level of the tower into $2^n$ sublevels $E_{\omega_1\cdots\omega_n}$ associated to each choice of $\omega_1\cdots\omega_n$. For $n\in\mathbb N$ we order the sublevels of the $n$th level of the tower by letting 
\[
l(\omega_1\cdots\omega_n)=\sum_{i=1}^n\omega_i2^{i-1}\in\{1,\cdots 2^n\}.
\]

We let the height of the sublevel of $E_n$ assocated to $\omega_1\cdots\omega_n$ be given by \[v(\omega_1\cdots\omega_n)=\sum_{k=0}^{n-1}\beta^{-k}+\frac{l(\omega_1\cdots\omega_n)-1}{(2\beta)^n}.\]

Then the set of intervals 
\[
\left\{\left[v(\omega_1\cdots\omega_n),v(\omega_1\cdots\omega_n)+\frac{1}{(2\beta)^n}\right):\omega_1\cdots\omega_n\in\{0,1\}^n\right\}
\]
partition the interval $[\sum_{i=0}^{n-1}\frac{1}{\beta^{i}},\sum_{i=0}^{n}\frac{1}{\beta^{i}}]$, this interval will correspond to the $y$-coordinates of the $n$th level of the tower.

The right end points of the sublevels of our tower are given in terms of the orbit of $1$ under $R_{\beta,\omega}$, we define
\[
r(\omega_1\cdots\omega_n):=R_{\beta,\omega_1\cdots\omega_n}^n(1).
\]

Then for $\omega_1\cdots\omega_n\in\{0,1\}^n$ we define the set 
\[
E_{\omega_1\cdots\omega_n}:=\Omega\times[0,r(\omega_1\cdots\omega_n)]\times\left[v(\omega_1\cdots\omega_n),v(\omega_1\cdots\omega_n)+\frac{1}{(2\beta)^n}\right).
\]
Finally we define $E_{base}=\Omega\times[0,1]\times[0,1)$ and the tower \[E:=E_{base}\cup\left(\bigcup_{n=1}^{\infty}\bigcup_{\omega_1\cdots\omega_n\in\{0,1\}^n} E_{\omega_1\cdots\omega_n}\right).\]

This resembles the tower for the greedy $\beta$-transformation except that the $n$th level is split into different sublevels corresponding to the different orbits of $1$, and there is an extra first coordinate corresponding to the sequences $\omega\in\Omega$.

\subsection{Dynamics on the Tower}
We define a map $\psi$ on the tower $E$. In principle, $\psi$ works exactly the same way as the natural extension of the greedy map given in section \ref{greedysection}, we define $\psi(\sigma^n(\omega),x,y)$ based on the action of $R_{\beta,\omega_{n+1}}$ on the right end point of the sublevel of the tower to which $(\sigma^n(\omega),x,y)$ belongs. 
\begin{enumerate}
\item If $R_{\beta,\omega_{n+1}}$ acts by $T_0$ then $[\omega_{n+1}]\cap E_{\omega_1\cdots\omega_n}$ is mapped bijectively onto $E_{\omega_1\cdots\omega_{n+1}}$
\item If $R_{\beta,\omega_{n+1}}$ acts by $T_1$ then $[\omega_{n+1}]\cap E_{\omega_1\cdots\omega_n}$ is split into two pieces, one of which is mapped bijectively onto $E_{\omega_1\cdots\omega_{n+1}}$ and one of which is mapped back to $E_{base}$.
\end{enumerate}

{\bf Case 1:} If $R_{\beta,\omega_{n+1}}(r(\omega_1\cdots\omega_n))=T_0(r(\omega_1\cdots\omega_n))$ then map $[\omega_{n+1}]\cap E_{\omega_1\cdots\omega_n}$ onto $E_{\omega_1\cdots\omega_{n+1}}$ by shifting the first coordinate, applying $T_0$ to the second and shrinking by $\frac{1}{2\beta}$ and translating in the third coordinate.

More precisely, we define $C_1$ by
\[
C_1(\omega_1\cdots\omega_{n+1})=v(\omega_1\cdots\omega_{n+1})-\frac{v(\omega_1\cdots\omega_n)}{2\beta}
\]
and then define $\psi:[\omega_{n+1}]\cap E_{\omega_1\cdots\omega_n}\to E_{\omega_1\cdots\omega_{n+1}}$ by
\[
\psi(\sigma^n(\omega),x,y)=\left(\sigma^{n+1}(\omega),T_0(x),\frac{y}{2\beta}+C_1(\omega_1\cdots\omega_{n+1})\right).
\]

We stress that one can recover $\omega_1\cdots \omega_n$ by knowing which level $E_{\omega_1\cdots\omega_n}$ the triple $(\sigma^n(\omega),x,y)$ lies, thus $C_1$ and $\psi$ are well defined.

We see that $T_0[0,r(\omega_1\cdots\omega_n)]=[0,\beta r(\omega_1\cdots\omega_n)]=[0,r(\omega_1\cdots\omega_{n+1})]$, and that
\begin{eqnarray*}
& &\frac{1}{2\beta}\left[v(\omega_1\cdots\omega_n),v(\omega_1\cdots\omega_{n})+\frac{1}{(2\beta)^n}\right)+C_1(\omega_1\cdots\omega_{n+1})\\
&=&\left[v(\omega_1\cdots\omega_{n+1}),v(\omega_1\cdots\omega_{n+1})+\frac{1}{(2\beta)^{n+1}}\right),
\end{eqnarray*}
making the map  $\psi:[\omega_{n+1}]\cap E_{\omega_1\cdots\omega_n}\to E_{\omega_1\cdots\omega_{n+1}}$ a bijection.

{\bf Case 2:} If $R_{\beta,\omega_{n+1}}(r(\omega_1\cdots\omega_n))=T_1(r(\omega_1\cdots\omega_n))$ then split $[\omega_{n+1}]\cap E_{\omega_1\cdots\omega_n}$ into two pieces.

We let the part with $x$ coordinates in $S\cup R$ be mapped bijectively onto $E_{\omega_1\cdots\omega_{n+1}}$ by
\[
\psi(\sigma^n(\omega),x,y)=\left(\sigma^{n+1}(\omega),T_1(x),\frac{y}{2\beta}+C_1(\omega_1\cdots\omega_{n+1})\right),
\]
as in case 1.

We map the part with $x$ coordinates in $L$ back down into a horizontal strip of height $\frac{1}{(2\beta)^{n+1}}$ across $E_{base}$. We define the constant
\[
C_2(\omega_1\cdots\omega_{n+1})=\frac{1}{2\beta}+\left(\sum_{\substack{a_1\cdots a_{m+1}:v(a_1\cdots a_m)<v(\omega_1\cdots\omega_{n})\\ r(a_1\cdots a_{m+1})=T_1(r(a_1\cdots a_m))}}\frac{1}{(2\beta)^{m+1}}\right)-\frac{v(\omega_1\cdots\omega_n)}{2\beta},
\]
which is chosen so that the image of $[\omega_{n+1}]\cap E_{\omega_1\cdots\omega_n}\cap\{x\in L\}$ under $\psi$ lies exactly on top of all the previous pieces which have been mapped back into $E_{base}$ in the $y$ direction.


We define $\psi:[\omega_{n+1}]\cap\{(\sigma^n(\omega),x,y)\in E_{\omega_1\cdots\omega_n}:x\in L\}\to E_{base}$ by
\[
\psi(\sigma^n(\omega),x,y):=(\sigma^{n+1}(\omega),T_0(x),\frac{y}{2\beta}+C_2(\omega_1\cdots\omega_{n+1})).
\]

We have now defined $\psi$ on all of $E$. As shorthand we partition the sets $E_{\omega_1\cdots\omega_n}$ into the set $E_{\omega_1\cdots\omega_n}^U$ of those points which are mapped up the tower (i.e. whose $y$-coordinates increase under the action of $\psi$) and the set $E_{\omega_1\cdots\omega_n}^D$ of points which are mapped down into $E_{base}$ by $\psi$. We partition $E$ into $E^U$ and $E^D$ in the same way.
\begin{lemma}\label{mp}
The transformation $\psi:E\to E$ is bijective almost everywhere and preserves measure $\tilde{\lambda}:=(m\times\lambda\times\lambda)|_E$.
\end{lemma}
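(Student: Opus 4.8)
The plan is to exhibit a countable measurable partition $\{P_i\}$ of $E$ on each piece of which $\psi$ is an injective affine map with Jacobian identically $1$, and then to check that the images $\{\psi(P_i)\}$ again partition $E$ up to a $\tilde\lambda$-null set; almost everywhere bijectivity and invariance of $\tilde\lambda$ follow immediately.

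First I would fix the partition: on $E_{base}$ and on each $E_{\omega_1\cdots\omega_n}$ cut by the two cylinders $\{\tau_1=0\}$, $\{\tau_1=1\}$ in the first coordinate (writing $\tau$ for the first coordinate), and in Case $2$ cut each of these again into the parts $\{x\in L\}$ and $\{x\in S\cup R\}$. On any such piece $\psi$ is a product of three affine maps: the shift $\sigma$ on the first coordinate restricted to a half-cylinder, under which $m$ pushes forward to $2m$; the affine map $T_0$ or $T_1$ on the second coordinate, which multiplies $\lambda$ by $\beta$; and the contraction $y\mapsto y/(2\beta)+C$ on the third coordinate, which multiplies $\lambda$ by $1/(2\beta)$. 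The product of the three factors is $2\cdot\beta\cdot(2\beta)^{-1}=1$, so $\tilde\lambda(\psi(A))=\tilde\lambda(A)$ for every measurable $A$ contained in a single piece. That $\psi$ maps each piece bijectively onto its image is essentially the content of the computations already carried out in Cases $1$ and $2$ (for instance $T_0[0,r(\omega_1\cdots\omega_n)]=[0,r(\omega_1\cdots\omega_{n+1})]$ in Case $1$, $T_1([\tfrac1\beta,r(\omega_1\cdots\omega_n)])=[0,r(\omega_1\cdots\omega_{n+1})]$ for the up-part of Case $2$, $T_0[0,\tfrac1\beta)=[0,1)$ for the down-part, together with the affine identifications in the $y$-coordinate).

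It remains to verify that the images tile $E$. Separate the pieces into those mapped up, whose union is $E^U$, and those mapped down, whose union is $E^D$; off the $\tilde\lambda$-null discontinuity fibres $\{x=\tfrac1\beta\}$ and $\{x=\tfrac1{\beta(\beta-1)}\}$ one has $E=E^U\sqcup E^D$. For the up-pieces, each sublevel $E_{\omega_1\cdots\omega_k}$ with $k\geq1$ is the image of exactly one up-piece, namely the appropriate part of $E_{\omega_1\cdots\omega_{k-1}}$, and that map is onto all of $E_{\omega_1\cdots\omega_k}$; since distinct sublevels are pairwise disjoint, the up-images are pairwise disjoint with union $E\setminus E_{base}$, so $\psi(E^U)=E\setminus E_{base}$. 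For the down-pieces, every image $\psi(P)$ is by construction a horizontal strip $\Omega\times[0,1)\times J_P$ inside $E_{base}$, and the constants $C_2$ are chosen precisely so that these strips stack directly one above another in the order given by the height $v(\cdot)$ of the level they issue from (with the two strips arising from a single level in the subcase $r(\cdot)\in R$ placed consecutively); hence the down-images are pairwise disjoint. Invariance of $\tilde\lambda$ on each piece gives $\tilde\lambda(E^U)=\tilde\lambda(\psi(E^U))=\tilde\lambda(E\setminus E_{base})$, and $\tilde\lambda(E)\leq 1+\sum_{k\geq1}2^k(\beta-1)^{-1}(2\beta)^{-k}=1+(\beta-1)^{-2}<\infty$; since also $\tilde\lambda(E)=\tilde\lambda(E^U)+\tilde\lambda(E^D)$, we get $\tilde\lambda(E^D)=\tilde\lambda(E_{base})$. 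Thus $\psi(E^D)$ is a disjoint family of strips of full $\Omega\times[0,1]$ cross-section, contained in $E_{base}$, whose total $y$-length equals the $y$-length of $E_{base}$, and hence $\psi(E^D)=E_{base}$ up to a null set. Together with $\psi(E^U)=E\setminus E_{base}$ this shows $\{\psi(P_i)\}$ partitions $E$ modulo $\tilde\lambda$.

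Given these two partition facts, $\psi$ is injective off a null set (images of distinct pieces meet in a null set, and $\psi$ preserves measure on each piece) and surjective off a null set, hence a.e. bijective; and for measurable $A\subseteq E$ one has $\tilde\lambda(\psi^{-1}A)=\sum_i\tilde\lambda(\psi^{-1}A\cap P_i)=\sum_i\tilde\lambda(A\cap\psi(P_i))=\tilde\lambda(A)$, so $\tilde\lambda$ is preserved. I expect the main obstacle to be the surjectivity onto $E_{base}$: one must confirm that the down-strips genuinely stack up to fill $E_{base}$ with neither gaps nor overlaps. The measure count $\tilde\lambda(E^D)=\tilde\lambda(E_{base})$ does half of this, but one still has to establish disjointness of the strips carefully — in particular treating the two strips produced from a single level with $r(\cdot)\in R$, and more generally checking that the ordering by $v(\cdot)$ implicit in the definition of $C_2$ has no problematic ties.
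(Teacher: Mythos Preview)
Your proposal is correct and follows essentially the same route as the paper: both reduce to the Jacobian computation $2\cdot\beta\cdot(2\beta)^{-1}=1$, the observation that $\psi|_{E^U}$ is a bijection onto $E\setminus E_{base}$, and a finite-measure argument showing the down-strips fill $E_{base}$. The only difference is cosmetic: the paper computes the total strip height directly by writing $\tilde\lambda(E_k)=1-\tfrac{1}{2\beta}-\sum_{n<k}\sum(\cdot)$ and using $\tilde\lambda(E_k)\to 0$, whereas you obtain $\tilde\lambda(E^D)=\tilde\lambda(E_{base})$ in one line by subtracting $\tilde\lambda(E^U)=\tilde\lambda(E\setminus E_{base})$ from the finite total; both rest on the same finiteness of $\tilde\lambda(E)$ and the same disjointness-by-design of $C_2$ that you flag at the end.
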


\begin{proof}
The transformation shifts in the first coordinate (which expands distance by a factor of two), stretches by a factor of $\beta$ in the second coordinate, and shrinks by a factor of $\frac{1}{2\beta}$ in the third coordinate. Thus, if we can prove that $\psi$ is a bijection almost everywhere this will automatically give that it preserves the measure $\tilde{\lambda}$. We have already argued that the restriction of $\psi$ to $E^U$ is a bijection onto $E/E_{base}$. It remains to prove only that $\psi$ restricted to $E^D$ maps bijectively onto $E_{base}$. 

The constant $C_2$ ensures that $E_{\omega_1\cdots\omega_n}^D$ is mapped exactly on top of all of the rectangles which have already been mapped into $E_{base}$. So $\psi$ maps $\bigcup_{n=1}^{\infty}\bigcup_{\omega_1\cdots\omega_n\in\{0,1\}^n} E_{\omega_1\cdots\omega_n}^D$ bijectively into 
\[
\Omega\times[0,1]\times\left[0,\frac{1}{2\beta}+\sum_{n=1}^{\infty}\sum_{\omega_1\cdots\omega_n\in\{0,1\}^n:E^D_{\omega_1\cdots \omega_n}\neq\phi}\frac{1}{(2\beta)^{n+1}}\right],
\]
where the term $\frac{1}{2\beta}$ corresponds to the part of $E_{base}$ which is mapped directly back into $E_{base}$. It remains to show that \[\frac{1}{2\beta}+\sum_{n=1}^{\infty}\sum_{\omega_1\cdots\omega_n\in\{0,1\}^n:E^D_{\omega_1\cdots \omega_n}\neq\phi}\frac{1}{(2\beta)^{n+1}}=1.\]

To prove this, we first observe that our tower has finite measure since it is contained in the box $\Omega\times[0,\frac{1}{\beta-1}]\times[0,\sum_{n=0}^{\infty}\beta^{-n}]$. Each time we apply $\psi$ to a level of the tower, part of the level is mapped up to the next level while part is mapped back into $E_{base}$. Each of these maps up the tower are measure preserving bijections onto their image.We denote the $k$th level of the tower $E_k:=\cup_{\omega_1\cdots\omega_k\in\{0,1\}^k}E_{\omega_1\cdots\omega_k}$. Then the total mass of $E_k$ is equal to one minus the mass of those parts of the first $k-1$ levels of the tower which are mapped back into $E_{base}$. Mass $\frac{1}{2\beta}$ is mapped from $E_{base}$ directly back into $E_{base}$. So
\[
\tilde{\mu}(E_k)=1-\frac{1}{2\beta}-\sum_{n=1}^{k-1}\sum_{\omega_1\cdots\omega_n:E_{\omega_1\cdots\omega_n}^D\neq\phi}\frac{1}{(2\beta)^{n+1}}
\]
Then, since $\sum_{k=1}^{\infty}\tilde{\mu}(E_k)<\infty$, we see that $\tilde{\mu}(E_k)\to 0$ as $k\to\infty$, giving that
\[
\frac{1}{2\beta}+\lim_{k\to\infty}\sum_{n=1}^{k}\sum_{\omega_1\cdots\omega_n\in\{0,1\}^n:E^D_{\omega_1\cdots \omega_n}\neq\phi}\frac{1}{(2\beta)^{n+1}}=1
\]
as required.
\end{proof}

\section{A Natural Extension}
In order to build a natural extension of the map $R_{\beta}$ we need to build a dynamical system that acts the same way as $R_{\beta}$ on its first two coordinates. The system $(E,\psi)$ that we have built is heavily based on $R_{\beta}$, but we have defined $\psi$ on $(\sigma^n(\omega),x,y)\in E_{\omega_1\cdots\omega_n}$ in terms of the action of $R_{\beta,\omega_{n+1}}$ on $r(\omega_1\cdots\omega_n)$ rather than on $x$. In most situations this is sufficient and the projection onto the first two coordinates of $\psi(\sigma^n(\omega),x,y)$ is equal to $R_{\beta}(\sigma^n(\omega),x)$, but in some cases there is a discrepancy as described in the following lemma.


\begin{lemma}\label{problem}
For $(\sigma^n(\omega),x,y)\in E_{\omega_1\cdots\omega_n}$ we have that 
\[
\pi_2(\psi(\sigma^n(\omega),x,y))=\left\lbrace\begin{array}{cc}R_{\beta,\omega_{n+1}}(x)-1&x\in S, \omega_{n+1}=0\text{ and }r(\omega_1\cdots\omega_n)\in R\\ R_{\beta,\omega_{n+1}}(x)& \text{ otherwise }\end{array}\right. .
\]
\end{lemma}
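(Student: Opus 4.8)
The plan is a case analysis that simultaneously tracks three things: (i) which branch of the definition of $\psi$ is applied to the sublevel $E_{\omega_1\cdots\omega_n}$, which depends only on the location of $r:=r(\omega_1\cdots\omega_n)$ in the partition $\{L,S,R\}$ together with the digit $\omega_{n+1}$; (ii) which of $T_0,T_1$ is thereby applied to the second coordinate $x$; and (iii) the value of $R_{\beta,\omega_{n+1}}(x)$, which depends only on the location of $x$ in $\{L,S,R\}$. The bridge between (i) and (iii) is that membership of $(\sigma^n(\omega),x,y)$ in $E_{\omega_1\cdots\omega_n}$ forces $x\in[0,r]$, so the location of $x$ is constrained once that of $r$ is known.

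First I would record the elementary facts from the definitions. From the definition of $R_\beta$, Case 1 (that is, $R_{\beta,\omega_{n+1}}(r)=T_0(r)$, so $\psi$ applies $T_0$ to $x$) occurs precisely when $r\in L$, or $r\in S$ and $\omega_{n+1}=0$; Case 2 occurs precisely when $r\in R$, or $r\in S$ and $\omega_{n+1}=1$, and in Case 2 the map $\psi$ applies $T_1$ to $x$ when $x\in S\cup R$ and $T_0$ to $x$ when $x\in L$. On the other hand $R_{\beta,\omega_{n+1}}(x)$ equals $T_0(x)$ if $x\in L$, equals $T_{\omega_{n+1}}(x)$ if $x\in S$, and equals $T_1(x)$ if $x\in R$. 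Finally, $T_1=T_0-1$.

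Then I would split according to the location of $r$. If $r\in L$ then $x\le r<\tfrac1\beta$ forces $x\in L$, Case 1 applies, and both $\psi$ and $R_{\beta,\omega_{n+1}}$ apply $T_0$ to $x$. If $r\in S$ then $x\in L\cup S$; for $\omega_{n+1}=0$, Case 1 applies and $\psi$ uses $T_0$, while $R_{\beta,\omega_{n+1}}(x)$ is $T_0(x)$ both when $x\in L$ and when $x\in S$ (here using $\omega_{n+1}=0$); for $\omega_{n+1}=1$, Case 2 applies, and for $x\in L$ both maps give $T_0(x)$, while for $x\in S$ both give $T_1(x)$ (using $\omega_{n+1}=1$). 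Finally if $r\in R$ then Case 2 applies and $x$ may lie in $L$, $S$ or $R$: for $x\in L$ both maps give $T_0(x)$; for $x\in R$ both give $T_1(x)$; for $x\in S$ the map $\psi$ gives $T_1(x)$ while $R_{\beta,\omega_{n+1}}(x)=T_{\omega_{n+1}}(x)$, so the two agree when $\omega_{n+1}=1$, whereas when $\omega_{n+1}=0$ one gets $\pi_2(\psi(\sigma^n(\omega),x,y))=T_1(x)=T_0(x)-1=R_{\beta,\omega_{n+1}}(x)-1$. This last subcase is exactly the exceptional line in the statement, and in every other subcase the two values coincide; the countably many boundary points on each level (the endpoints of $L,S,R$ and the point $x=r$) may be ignored. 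Assembling the cases gives the lemma.

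The argument is entirely bookkeeping and I do not expect a genuine obstacle; the one point to be careful about is that in Case 2 the splitting of $E_{\omega_1\cdots\omega_n}$ is governed by whether $x$ lies in $L$ or in $S\cup R$, and not by the value of the digit $\omega_{n+1}$. Consequently, when $r\in R$ the middle interval $S$ has $T_1$ applied to it no matter what $\omega_{n+1}$ is, and this is precisely the source of the off-by-one discrepancy in the case $\omega_{n+1}=0$.
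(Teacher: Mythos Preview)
Your proof is correct and takes essentially the same approach as the paper's own proof: both are straightforward case analyses comparing the map $T_0$ or $T_1$ that $\psi$ applies to $x$ (determined by how $R_{\beta,\omega_{n+1}}$ acts on $r$) with the map that $R_{\beta,\omega_{n+1}}$ applies to $x$ (determined by the location of $x$), using $x\le r$ to constrain the possibilities. The only cosmetic difference is that you organise the cases by the location of $r$ first, whereas the paper organises them by the location of $x$ first; the content is the same.
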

\begin{proof}
We see that if $x\in L$ then the action of $\psi$ on the second coordinate is to send $x$ to $\beta x$, as required. However if $x\in S\cup R$ then $\psi$ acts on the second coordinate in the same way that $R_{\beta,\omega_{n+1}}$ acts on $r(\omega_1\cdots\omega_n)$. 

If $x\in R$ then $r(\omega_1\cdots\omega_n)$ is necessarily in $R$, and so $x$ is acted on by $x\to\beta x-1$ as required. If $x\in S$ and $r(\omega_1\cdots\omega_n)\in S$ then $x\to \beta x-\omega_{n+1}$, again as required. However, if $r(\omega_1\cdots\omega_n)\in R$ then it is always mapped to $\beta r(\omega_1\cdots\omega_n)-1$ irrespective of $\omega$, and so in the case that $\omega_{n+1}=0$ there is a discrepancy between the action of $\psi$ on the tower and the action of $R_{\beta}$.\end{proof}

We let $F_{\omega_1\cdots\omega_n}$ be the set of elements of $E_{\omega_1\cdots\omega_n}$ for which $\psi$ does not behave as a natural extension of $R_{\beta}$, i.e.
\[
F_{\omega_1\cdots\omega_n}:=\left\lbrace\begin{array}{cc}\{(\sigma^n(\omega),x,y)\in E_{\omega_1\cdots\omega_n}:\omega_{n+1}=0,x\in S\}&r(\omega_1\cdots\omega_n)\in R\\
 \phi & \text{ otherwise}.\end{array}\right. .
\]

In fact we see that $F_{\omega_1\cdots\omega_n}$ is mapped by $\psi$ to points with $x$ coordinates in $(\beta-1)S=[0,\frac{1}{\beta-1}-1]$, whereas $S$ is mapped by $R_{\beta,\omega_{n+1}}$ to $\beta S=[1,\frac{1}{\beta-1}]$. We also note that the sets $[0,\frac{1}{\beta-1}-1]$ and $[1,\frac{1}{\beta-1}]$ are reflections of each other in the central line $x=\frac{1}{2(\beta-1)}$. 

The tower that we have constructed so far consists of rectangles which are attached to the left hand side of the interval $[0,\frac{1}{\beta-1}]$ which are defined in terms of the orbits of the point $1$. Since the map $R_{\beta}$ is symmetric we could just as well have constructed a tower out of rectangles attached to the right hand side of $[0,\frac{1}{\beta-1}]$, defined in terms of the orbits of $\frac{1}{\beta-1}-1$. If we were to define a dynamical system on this new tower by reflecting $\psi$ we would have the opposite problem to that outlined in Lemma \ref{problem}, our map would sometimes map to rectangles with $x$ coordinates in $[1,\frac{1}{\beta-1}]$ whereas $R_{\beta,\omega_{n+1}}$ would map them to $[0,\frac{1}{\beta-1}-1]$. 

Our solution is to have both towers. Given $\omega=(\omega_i)_{i=1}^{\infty}\in\Omega$ we define the complementary sequence $\overline{\omega}$ by $\overline{\omega_i}=1-\omega_i$. Then for $(\omega,x,y)\in E$ we define
\[
P(\omega,x,y)=(\overline{\omega},\frac{1}{\beta-1}-x,-y).
\]
Then $P(E)$ gives a second tower $\overline E$, which is disjoint from $E$. We let $\overline E_{\overline{\omega_1}\cdots\overline{\omega_n}}=P(E_{\omega_1\cdots\omega_n})$ and $\overline F_{\overline{\omega_1}\cdots\overline{\omega_n}}=P(F_{\omega_1\cdots\omega_n})$. We extend the map $\psi$ to $\overline E$ by defining
\[
\psi(\omega,x,y)=P\circ \psi\circ P^{-1}(\omega,x,y)
\]
for $(\omega,x,y)\in\overline E$. 

We define $Q:\Omega\times\mathbb R^2\to\Omega\times\mathbb R^2$ by
\[
Q(\sigma^n(\omega),x,y)=\left\lbrace\begin{array}{cc}
                                  (\sigma^n(\omega),x+1,-y) & (\sigma^n(\omega),x,y)\in \psi(F_{\omega_1\cdots\omega_n})\\
(\sigma^n(\omega),x-1,-y) & (\sigma^n(\omega),x,y)\in \psi(F_{\overline{\omega_1}\cdots\overline{\omega_n}})\\
(\sigma^n(\omega),x,y) & \text{ otherwise }
                                 \end{array}\right.
\]

$Q$ plays the role of swapping those points defined in Lemma \ref{problem} for which $\psi$ does not behave as a natural extension of $R_{\beta}$ with the corresponding points in $\overline E$ which had an equal and opposite problem. This allows us to define our natural extension.

\begin{theorem}
The function $\tilde \psi:E\cup\overline E\to E\cup\overline E$ defined by 
\[\tilde \psi= Q\circ \psi\] is a natural extension of $(\Omega\times I_{\beta}, R_{\beta}, \hat{\mu}_{\beta})$.
\end{theorem}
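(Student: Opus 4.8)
The plan is to produce the coordinate projection $\pi(\eta,x,y):=(\eta,x)$ and show that $(E\cup\overline E,\tilde\psi,\tilde\lambda)$, with $\tilde\lambda:=(m\times\lambda\times\lambda)|_{E\cup\overline E}$ ($m$ the $(\tfrac12,\tfrac12)$-Bernoulli measure, $\lambda$ Lebesgue measure), together with $\pi$ is a natural extension of $(\Omega\times I_\beta,R_\beta,\hat\mu_\beta)$. Writing $\mathcal B_Y$ for the Borel $\sigma$-algebra of a space $Y$, there are four things to check: (i) $\tilde\psi$ is a $\tilde\lambda$-preserving a.e.\ bijection of $E\cup\overline E$; (ii) $\pi\circ\tilde\psi=R_\beta\circ\pi$ $\tilde\lambda$-a.e.; (iii) $\pi_*\tilde\lambda$ is a scalar multiple of $\hat\mu_\beta$; and (iv) $\bigvee_{k\ge0}\tilde\psi^{\,k}\pi^{-1}\mathcal B_{\Omega\times I_\beta}=\mathcal B_{E\cup\overline E}$ mod $\tilde\lambda$. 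Step (iii) is exactly where Theorem \ref{densitytheorem} drops out, and step (iv) is the only one that is not a routine unwinding of the definitions.

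For (i), Lemma \ref{mp} gives that $\psi$ is a $\tilde\lambda$-preserving a.e.\ bijection of $E$, and since $P$ preserves $m\times\lambda\times\lambda$ (each of the three factors is symmetric under $\eta\mapsto\overline\eta$, $x\mapsto\frac1{\beta-1}-x$, $y\mapsto-y$), the conjugate $P\psi P^{-1}$ is one on $\overline E$, hence $\psi$ is one on $E\cup\overline E$ (the two towers differ on a $\tilde\lambda$-null set). It then suffices to see that $Q$ is a $\tilde\lambda$-preserving involution: using Case~2 of the construction together with the symmetries $\frac1{\beta-1}-S=S$ and $T_1(S)=[0,\frac1{\beta-1}-1]$ one computes $\psi(F_{\omega_1\cdots\omega_n})=\Omega\times[0,\frac1{\beta-1}-1]\times J_{\omega_1\cdots\omega_n0}$, where $J_{\omega_1\cdots\omega_m}$ denotes the $y$-interval of the sublevel $E_{\omega_1\cdots\omega_m}$, whence $(x,y)\mapsto(x+1,-y)$ carries this set exactly onto $P\psi(F_{\omega_1\cdots\omega_n})=\psi(\overline F_{\overline\omega_1\cdots\overline\omega_n})$, and symmetrically for the reverse move. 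As these sets are pairwise disjoint (they lie in distinct sublevels, one batch in $E$ and one in $\overline E$) and $(x,y)\mapsto(x\pm1,-y)$ preserves $\lambda\times\lambda$ and fixes $\eta$, $Q$ is a $\tilde\lambda$-preserving involution; as in Lemma \ref{mp}, preservation of $\tilde\lambda$ by $\tilde\psi=Q\circ\psi$ is then automatic once bijectivity is known, each branch having Jacobian $1$. For (ii), $\psi$ acts by the shift on the first coordinate and $Q$ acts trivially there, matching $R_\beta$; on the second coordinate Lemma \ref{problem} shows $\pi_2(\psi(\cdot))=R_{\beta,\omega_{n+1}}(\cdot)$ off $F_{\omega_1\cdots\omega_n}$ (and, after applying $P$, off $\overline F_{\overline\omega_1\cdots\overline\omega_n}$), where it differs by $\mp1$, and $Q$ by definition adds back exactly $\pm1$ precisely on $\psi(F_{\omega_1\cdots\omega_n})$ and $\psi(\overline F_{\overline\omega_1\cdots\overline\omega_n})$. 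That $Q$ introduces no spurious correction elsewhere uses the inclusion $x\le r(\omega_1\cdots\omega_n)$ valid on $E_{\omega_1\cdots\omega_n}$: if $r(\omega_1\cdots\omega_n)\notin R$ then $x\notin R$ and the exceptional case of Lemma \ref{problem} cannot occur, while in the remaining cases the $\psi$-images land in $y$-intervals disjoint from those of $\psi(F)\cup\psi(\overline F)$. Hence $\pi\circ\tilde\psi=R_\beta\circ\pi$.

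For (iii), $\pi_*\tilde\lambda$ is $R_\beta$-invariant (since $\tilde\lambda$ is $\tilde\psi$-invariant and $\pi$ intertwines) and absolutely continuous, with density obtained by integrating out $y$ along the fibre of $\pi$. Over $(\eta,x)$, the part of the fibre in $E$ meets each sublevel $E_{\omega_1\cdots\omega_n}$ in an interval of $\lambda$-measure $(2\beta)^{-n}$ precisely when $x\le r(\omega_1\cdots\omega_n)=R^{n}_{\beta,\omega_1\cdots\omega_n}(1)$, contributing $\sum_{n\ge0}\sum_{\omega_1\cdots\omega_n}(2\beta)^{-n}\chi_{[0,R^{n}_{\beta,\omega_1\cdots\omega_n}(1)]}(x)$, the $n=0$ term being the $E_{base}$ contribution $\chi_{[0,1]}(x)$. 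Applying $P$ together with the reflection identity $\frac1{\beta-1}-R^{n}_{\beta,\omega_1\cdots\omega_n}(1)=R^{n}_{\beta,\overline\omega_1\cdots\overline\omega_n}(\frac1{\beta-1}-1)$ (and re-indexing $\overline\omega\mapsto\omega$), the part of the fibre in $\overline E$ contributes $\sum_{n\ge0}\sum_{\omega_1\cdots\omega_n}(2\beta)^{-n}\chi_{[R^{n}_{\beta,\omega_1\cdots\omega_n}(\frac1{\beta-1}-1),\,\frac1{\beta-1}]}(x)$. The resulting density is independent of $\eta$, and $\pi_*\tilde\lambda$ is finite because $E\cup\overline E$ is contained in a box of finite $(m\times\lambda\times\lambda)$-measure; thus $\pi_*\tilde\lambda=m\times(\rho_\beta\,\lambda)$ for the function $\rho_\beta$ of Theorem \ref{densitytheorem}, and since $\hat\mu_\beta$ is the unique absolutely continuous $R_\beta$-invariant probability measure its normalisation is $\hat\mu_\beta$. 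This completes (iii) and simultaneously proves Theorem \ref{densitytheorem}.

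Finally, (iv) is the technical heart. One must show $y$ is $\tilde\lambda$-a.e.\ measurable with respect to the backward $\pi$-orbit. From any point the backward orbit strictly descends one level per step until it reaches the union $E_{base}\cup P(E_{base})$ of the two bases, then re-enters the tower and descends again; by Poincaré recurrence for $\tilde\psi^{-1}$ it meets the bases infinitely often a.e., decomposing the backward orbit into infinitely many finite \emph{excursions}. Along each excursion the first symbols of the first coordinates, read off through $\pi$, recover the index of the sublevel from which that excursion descended (up to complementation, which itself records in which tower it lay; a backward step that undoes a $Q$-swap merely reflects $x\mapsto\frac1{\beta-1}-x$ and switches towers, harmlessly). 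Because the constants $C_1$ are chosen so that the sublevels $J_{\omega_1\cdots\omega_n}$ tile the level-$n$ block and the constants $C_2$ so that the descending images tile the $y$-fibre of $E_{base}$, knowing all these indices expresses $y(p)$ as a limit of affine functions of the (bounded but a priori unknown) $y$-coordinates at the successive base visits, with slopes $(2\beta)^{-(\text{cumulative excursion length})}\to0$. Hence $y(p)$, and so $p$, is determined $\tilde\lambda$-a.e., i.e.\ $\bigvee_{k\ge0}\tilde\psi^{\,k}\pi^{-1}\mathcal B_{\Omega\times I_\beta}$ separates points and equals $\mathcal B_{E\cup\overline E}$. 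I expect the excursion bookkeeping here — pinning down the exact correspondence between backward itineraries and the nested $y$-cells through $C_1$, $C_2$ and the $Q$-swaps — to be the main obstacle.
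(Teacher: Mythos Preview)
Your steps (i), (ii), and (iii) track the paper's own argument essentially line for line: the paper proves exactly your (ii) and (i) as two short lemmas (first that $\pi_2\circ\tilde\psi=R_{\beta,\omega_{n+1}}$ via Lemma~\ref{problem} and the $\pm1$ correction supplied by $Q$, then that $Q$ is a $\tilde\lambda$-preserving bijection by showing $Q$ interchanges $\psi(F_{\omega_1\cdots\omega_n})$ with $\psi(\overline F_{\overline\omega_1\cdots\overline\omega_n})$ by translation and reflection), and then reads off the density formula by integrating out $y$, which is your (iii).

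The substantive difference is your step (iv). The paper simply asserts, after the two lemmas, that $(E\cup\overline E,\tilde\psi,\tilde\lambda)$ is a natural extension; it does not verify the generating condition $\bigvee_{k\ge0}\tilde\psi^{\,k}\pi^{-1}\mathcal B_{\Omega\times I_\beta}=\mathcal B_{E\cup\overline E}$ at all. So your proposal is strictly more complete than the paper's proof on this point. Your excursion argument for (iv) is the right shape --- reconstructing $y$ from the sequence of sublevel indices visited on successive descents to the bases, using that each descent contracts the $y$-uncertainty by a factor $(2\beta)^{-(\text{excursion length})}$ --- and mirrors the standard verification for the greedy tower in \cite{DKS}. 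The one place to be careful, as you already flag, is tracking what happens when a backward step undoes a $Q$-swap: the sublevel label you read off at that step lives in the \emph{other} tower, so you must check that the complemented label and the sign flip in $y$ still place you in the correct nested cell. This is not difficult once written out, but it is genuinely absent from the paper.
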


This is proved by the following two lemmas.
\begin{lemma}
For $(\sigma^n(\omega),x,y)\in E\cup\overline E$ we have $\pi_2(\tilde \psi(\sigma^n(\omega),x,y))=R_{\beta,\omega_{n+1}}(x)$.
\end{lemma}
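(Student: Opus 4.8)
The plan is to verify the claimed identity $\pi_2(\tilde\psi(\sigma^n(\omega),x,y)) = R_{\beta,\omega_{n+1}}(x)$ by tracking how the correction map $Q$ repairs the discrepancy identified in Lemma \ref{problem}. First I would split into cases according to whether $(\sigma^n(\omega),x,y)$ lies in $E$ or in $\overline E$. Consider the case $(\sigma^n(\omega),x,y) \in E$, so it belongs to some $E_{\omega_1\cdots\omega_n}$. By Lemma \ref{problem}, $\pi_2(\psi(\sigma^n(\omega),x,y))$ equals $R_{\beta,\omega_{n+1}}(x)$ in all cases except when $x \in S$, $\omega_{n+1}=0$, and $r(\omega_1\cdots\omega_n) \in R$, in which case $\pi_2(\psi(\sigma^n(\omega),x,y)) = R_{\beta,\omega_{n+1}}(x) - 1 = \beta x - 1$. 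Thus outside of $F_{\omega_1\cdots\omega_n}$, the point $\psi(\sigma^n(\omega),x,y)$ has second coordinate already equal to $R_{\beta,\omega_{n+1}}(x)$; one must then check that $Q$ acts trivially there, i.e. that such an image point lies neither in $\psi(F_{\omega_1\cdots\omega_n})$ nor in $\psi(\overline F_{\overline{\omega_1}\cdots\overline{\omega_n}})$ — this follows because $\psi(F_{\omega_1\cdots\omega_n})$ is exactly the $\psi$-image of $F_{\omega_1\cdots\omega_n}$ (so its preimage under $\psi$ is $F_{\omega_1\cdots\omega_n}$, which our point is not in, using that $\psi$ is injective a.e. by Lemma \ref{mp}), and the second set lives in $\overline E$ while our image lies in $E$.

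Next I would handle the genuinely corrected case: $(\sigma^n(\omega),x,y) \in F_{\omega_1\cdots\omega_n}$. Then by definition $\psi(\sigma^n(\omega),x,y) \in \psi(F_{\omega_1\cdots\omega_n})$, so $Q$ applies the first branch and sends $(\sigma^n(\omega),x',y') \mapsto (\sigma^n(\omega),x'+1,-y')$, where $x' = \beta x - 1$. Hence $\pi_2(\tilde\psi(\sigma^n(\omega),x,y)) = x' + 1 = \beta x = R_{\beta,\omega_{n+1}}(x)$ since $\omega_{n+1}=0$, which is exactly what is required. Here I should also confirm that the shift in the first coordinate performed by $\psi$ is consistent with the indexing "$\sigma^n(\omega)$" in the statement — that is, $\psi$ maps points over $\sigma^n(\omega)$ to points over $\sigma^{n+1}(\omega)$, and $R_{\beta,\omega_{n+1}}$ is precisely the second-coordinate action associated with reading off $\omega_{n+1}$, so the bookkeeping matches.

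Then I would treat $(\sigma^n(\omega),x,y) \in \overline E$ by applying the conjugacy $\psi|_{\overline E} = P \circ \psi \circ P^{-1}$ together with $P(\omega,x,y) = (\overline\omega, \tfrac{1}{\beta-1}-x, -y)$. Using the symmetry $R_{\beta,\overline{\omega}_{n+1}}(\tfrac{1}{\beta-1}-x) = \tfrac{1}{\beta-1} - R_{\beta,\omega_{n+1}}(x)$ (valid since $T_0$ and $T_1$ are swapped by the reflection $x \mapsto \tfrac{1}{\beta-1}-x$ and the partition $L,S,R$ is reflection-symmetric), the $E$-case analysis transfers, and the second branch of $Q$ supplies exactly the mirror correction: when the reflected point lies in $\overline F_{\overline{\omega_1}\cdots\overline{\omega_n}}$, $Q$ does $x \mapsto x-1$, cancelling the $+1$ discrepancy that the reflected version of Lemma \ref{problem} would produce. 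I expect the main obstacle to be purely organizational rather than deep: one must be careful that the three branches of $Q$ are mutually exclusive on the relevant image points (so $Q$ is well-defined and does not "double-correct"), and that $\psi(F_{\omega_1\cdots\omega_n})$ and $\psi(\overline F_{\overline{\omega_1}\cdots\overline{\omega_n}})$ really are disjoint and sit, respectively, over $x$-coordinates in $[0,\tfrac{1}{\beta-1}-1]$ and $[1,\tfrac{1}{\beta-1}]$ as claimed in the discussion preceding the definition of $Q$; once that disjointness and the elementary reflection identity for $R_\beta$ are in hand, the case check is routine.
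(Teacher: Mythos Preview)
Your proposal is correct and follows essentially the same approach as the paper: split into $E$ versus $\overline E$, then within $E$ split into $E_{\omega_1\cdots\omega_n}\setminus F_{\omega_1\cdots\omega_n}$ (where $Q$ acts trivially and Lemma~\ref{problem} gives the result directly) and $F_{\omega_1\cdots\omega_n}$ (where $Q$ adds $1$ to cancel the $-1$ discrepancy), and finally handle $\overline E$ by the reflection symmetry. Your write-up is in fact more careful than the paper's terse version, explicitly checking that $Q$ is trivial on the non-$F$ case and spelling out the conjugacy argument for $\overline E$, which the paper dispatches in a single sentence.
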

\begin{proof}
Suppose that $(\sigma^n(\omega),x,y)\in E_{\omega_1\cdots\omega_n}\setminus F_{\omega_1\cdots\omega_n}$. Then by the definition of $Q$ and by Lemma \ref{problem} we have that
\[
\pi_2(\tilde \psi(\sigma^n(\omega),x,y))=\pi_2(\psi(\sigma^n(\omega),x,y))=R_{\beta,\omega_{n+1}}(x).
\]
Conversely, if $(\sigma^n(\omega),x,y)\in F_{\omega_1\cdots\omega_n}$ then
\begin{eqnarray*}
\pi_2(\tilde \psi(\sigma^n(\omega),x,y))&=&\pi_2(\psi(\sigma^n(\omega),x,y))+1\\
&=& R_{\beta,\omega_{n+1}}(x)-1+1=R_{\beta,\omega_{n+1}}(x)
\end{eqnarray*}
as required. The same arguments work for $\overline E$. \end{proof}

\begin{lemma}
The map $\tilde \psi:E\cup\overline E\to E\cup\overline E$ is a bijection which preserves Lebesgue measure $\tilde{\lambda}$.
\end{lemma}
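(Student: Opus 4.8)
The plan is to factor $\tilde\psi=Q\circ\psi$ and to check separately that $\psi$ and $Q$ are each bijections of $E\cup\overline E$ (after discarding $\tilde\lambda$-null sets) that preserve $\tilde\lambda$; the lemma then follows at once, since a composition of two such maps is again of this type.

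First I would establish the claim for $\psi$ on the enlarged domain. Lemma~\ref{mp} already gives that $\psi\colon E\to E$ is an a.e.\ bijection preserving $\tilde\lambda|_E$. The map $P(\omega,x,y)=(\overline\omega,\frac{1}{\beta-1}-x,-y)$ is an involution of $\Omega\times\mathbb R^2$ that preserves $m\times\lambda\times\lambda$ (the bit-flip $\omega\mapsto\overline\omega$ preserves the symmetric Bernoulli measure $m$, and the two coordinate reflections preserve Lebesgue measure), and by definition $\psi|_{\overline E}=P\circ(\psi|_E)\circ P^{-1}$; hence $\psi$ is also an a.e.\ bijection of $\overline E=P(E)$ onto itself preserving $\tilde\lambda|_{\overline E}$. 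Since $E$ and $\overline E$ are disjoint and each is sent onto itself mod $\tilde\lambda$, $\psi$ is an a.e.\ bijection of $E\cup\overline E$ preserving $\tilde\lambda$.

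The substantive step is the corresponding statement for $Q$. For this I would first compute $\psi(F_{\omega_1\cdots\omega_n})$ explicitly: when $r(\omega_1\cdots\omega_n)\in R$ we are in Case~2 and $\psi$ applies $T_1$ to the second coordinate regardless of the value of $\omega_{n+1}$, so, using $T_1(S)=[0,\frac{1}{\beta-1}-1]$ and the Case~1 computation of the third coordinate specialised to $\omega_{n+1}=0$,
\[
\psi(F_{\omega_1\cdots\omega_n})=\Omega\times\left[0,\frac{1}{\beta-1}-1\right]\times\left[v(\omega_1\cdots\omega_n0),\,v(\omega_1\cdots\omega_n0)+\frac{1}{(2\beta)^{n+1}}\right),
\]
which lies inside the sublevel $E_{\omega_1\cdots\omega_n0}$ (one checks $\frac{1}{\beta-1}-1<\beta r(\omega_1\cdots\omega_n)-1=r(\omega_1\cdots\omega_n0)$). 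Applying $P$ and using $\psi|_{\overline E}=P\psi P^{-1}$ gives, up to $\tilde\lambda$-null boundary sets,
\[
\psi(\overline F_{\overline{\omega_1}\cdots\overline{\omega_n}})=P\big(\psi(F_{\omega_1\cdots\omega_n})\big)=\Omega\times\left[1,\frac{1}{\beta-1}\right]\times\left[-v(\omega_1\cdots\omega_n0)-\frac{1}{(2\beta)^{n+1}},\,-v(\omega_1\cdots\omega_n0)\right].
\]
The key observation is that the map used in the definition of $Q$ on $\psi(F_{\omega_1\cdots\omega_n})$, namely $(\omega,x,y)\mapsto(\omega,x+1,-y)$, carries the first rectangle onto the second (the first coordinate $\Omega$ is fixed by both $Q$ and $P$), and its inverse $(\omega,x,y)\mapsto(\omega,x-1,-y)$ carries it back; both have Jacobian $\pm1$ on the $\mathbb R^2$-factor and act trivially on $(\Omega,m)$, hence preserve $m\times\lambda\times\lambda$. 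Since distinct words $\omega_1\cdots\omega_n0$ index distinct, disjoint sublevels, the sets $\psi(F_{\omega_1\cdots\omega_n})$ are pairwise disjoint; they all lie in $E$ (third coordinate $\geq1$) while the $\psi(\overline F_{\overline{\omega_1}\cdots\overline{\omega_n}})$ all lie in $\overline E$ (third coordinate $\leq-1$), so these two families are also disjoint from each other. Hence the three alternatives in the definition of $Q$ are mutually exclusive, $Q$ is well defined, and it simply interchanges the family $\{\psi(F_{\omega_1\cdots\omega_n})\}$ with the family $\{\psi(\overline F_{\overline{\omega_1}\cdots\overline{\omega_n}})\}$ and is the identity on the complement; consequently $Q$ is a bijection of $E\cup\overline E$ preserving $\tilde\lambda$.

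Combining the two factors, $\tilde\psi=Q\circ\psi$ is an a.e.\ bijection of $E\cup\overline E$ preserving $\tilde\lambda$. The hard part will be the content of the third paragraph: one has to pin down $\psi(F_{\omega_1\cdots\omega_n})$ precisely and check that the somewhat ad hoc recipe defining $Q$ --- translate the second coordinate by $\pm1$ and negate the third --- reproduces the reflection $P$ exactly on these pieces, so that $Q$ genuinely swaps $\psi(F_{\omega_1\cdots\omega_n})$ with $\psi(\overline F_{\overline{\omega_1}\cdots\overline{\omega_n}})$. Once this is in place, the disjointness bookkeeping and the measure-zero boundary issues (the endpoints $x=1$, $x=\frac{1}{\beta-1}-1$, the half-open edges of the sublevels, and the status of $E_{base}$) are routine.
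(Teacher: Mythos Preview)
Your proof is correct and follows essentially the same approach as the paper: factor $\tilde\psi=Q\circ\psi$, invoke Lemma~\ref{mp} (extended to $\overline E$ via the conjugation by $P$) for $\psi$, compute $\psi(F_{\omega_1\cdots\omega_n})$ as the slice of $E_{\omega_1\cdots\omega_n0}$ with $x\in[0,\frac{1}{\beta-1}-1]$, and then check that $Q$ exactly swaps $\psi(F_{\omega_1\cdots\omega_n})$ with $\psi(\overline F_{\overline{\omega_1}\cdots\overline{\omega_n}})=P(\psi(F_{\omega_1\cdots\omega_n}))$ while fixing the complement. Your write-up is in fact more explicit than the paper's (you spell out the $P$-conjugation argument for $\psi$ on $\overline E$ and the disjointness bookkeeping), but the structure and key computation are the same.
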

\begin{proof}
We have that $\tilde \psi:=Q\circ \psi$. We have proved that $\psi$ is a measure preserving bijection and so need only to prove that $Q$ is a measure preserving bijection. 

We see that $\psi(F_{\omega_1\cdots\omega_n})=E_{\omega_1\cdots\omega_n 0}\cap\{x\in[0,\frac{1}{\beta-1}-1]\}$. Then we have that
\begin{eqnarray*}
\psi(\overline F_{\overline{\omega_1}\cdots\overline{\omega_n}})&=&E_{\overline{\omega_1}\cdots\overline{\omega_n} 1}\cap\{x\in[1,\frac{1}{\beta-1}]\}\\
&=& E_{\overline{\omega_1}\cdots\overline{\omega_n} \overline 0}\cap\{x-1\in[0,\frac{1}{\beta-1}-1]\}\\
&=& Q(E_{\omega_1\cdots\omega_n 0}\cap\{x\in[0,\frac{1}{\beta-1}-1]\})\\
&=& Q\circ \psi(F_{\omega_1\cdots\omega_n}).
\end{eqnarray*}
Similarly $Q\circ \psi(\overline F_{\overline{\omega_1}\cdots\overline{\omega_n}})=\psi(F_{\omega_1\cdots\omega_n})$. Then we see that $Q$ leaves points unaffected if they are not an element of $\psi(F(\omega_1\cdots\omega_n))$ or $\psi(\overline F_{\overline{\omega_1}\cdots\overline{\omega_n}})$ for some $\omega_1\cdots\omega_n$, whereas it interchanges $\psi(F(\omega_1\cdots\omega_n))$ and $\psi(\overline F_{\overline{\omega_1}\cdots\overline{\omega_n}})$ by a translation and a reflection. Since translation and reflection preserve $\tilde{\lambda}$ we conclude that $Q$ is a measure preserving bijection as required.\end{proof}

Hence we have that the system $(E\cup\overline E,\tilde \psi,\tilde{\lambda})$ is a natural extension of $(\Omega\times I_{\beta},R_{\beta},\hat{\mu}_{\beta})$.

Finally we prove Theorem \ref{densitytheorem}. $E\cup \overline{E}$ is the product of $\Omega$ with a set in $\mathbb R^2$. Then projecting $\tilde{\lambda}=(m_{\frac{1}{2}}\times \lambda\times\lambda)|_{E\cup\overline E}$ onto $\Omega\times I_{\beta}$ we get the measure $m_{\frac{1}{2}}\times \mu_{\beta}*$ where $\mu_{\beta}*$ has density
\[
\int_{\mathbb R} \chi_{E\cup\overline E}(x,y) dy.
\]
Normalising this measure to make it a probability measure gives us the absolutely continuous invariant measure $\hat{\mu}_{\beta}$, and we see that the density of $\mu_{\beta}$ is given by
\begin{eqnarray*}
\rho_{\beta}(x)&=& C(\beta)\int_{\mathbb R} \chi_{E\cup\overline E}(x,y) dy\\
&=& C(\beta) \sum_{n=0}^{\infty}\frac{1}{(2\beta)^n}\left(\sum_{\omega_1\cdots\omega_n\in\{0,1\}^n} \chi_{[0,R_{\beta,\omega_1\cdots\omega_n}^n(1)]}(x)+\chi_{[R_{\beta,\omega_1\cdots\omega_n}^n(\frac{1}{\beta-1}-1),\frac{1}{\beta-1}]}(x)\right).
\end{eqnarray*}
This completes the proof of theorem \ref{densitytheorem}.

\section{Further Questions and Comments}
There are several natural questions arising from the construction of our invariant density. The first relates to the biased measures $\hat{\mu}_{\beta,p}$ which are the product of the $(p,1-p)$ Bernoulli measure on $\Omega$ with an absolutely continuous measure $\mu_{\beta,p}$ on $I_{\beta}$. In this article we dealt only with the unbiased measure $\hat{\mu}_{\beta}=\hat{\mu}_{\beta,\frac{1}{2}}$. It seems that our natural extension cannot easily be adapted to deal with the biased case\footnote{In particular, when we built our second tower and built a natural extension of $K_{\beta}$ using it, some mass was swapped between the two towers using the function $Q$. In the biased case the two towers will be of unequal mass and so $Q$ will not be measure preserving.}, but one might still hope to work out a formula for the invariant density.

{\bf Question 1:} Can one write down a formula for the density of the measures $\mu_{\beta,p}$? Is this continuous as a function of $p$?

A second natural question relates to the entropy of the systems $(\Omega\times I_{\beta},K_{\beta},\hat{\mu}_{\beta})$. Looking at the formula for the density of $\mu_{\beta}$ given in Theorem \ref{densitytheorem}, it seems that there are values of $\beta$ for which $\mu_{\beta_n}$ need not converge to $\mu_{\beta}$ in the weak$^*$ topology for sequences $\beta_n\to\beta$. In particular, there should be such a discontinuity whenever $\beta$ is such that \[K^n_{\beta}\left(\omega,\frac{1}{\beta}\right)=\left(\omega',\frac{1}{\beta(\beta-1)}\right)\] for some value of $n\in\mathbb N$ and $\omega,\omega'\in\Omega$. This should cause a corresponding discontinuity in the metric entropy $H$.

{\bf Question 2:} Can one characterise the values of $\beta$ for which the functions $\beta\to\mu_{\beta}$ and $\beta\to H_{\hat{\mu}_{\beta}}(K_{\beta})$ are discontinuous? 

Finally we have two questions about counting beta expansions. We recall that in \cite{CountingBeta} we studied the number of words of length $n$ which can be extended to $\beta$-expansions of $x$ for typical $x$. We defined
\[
\mathcal E^n_{\beta}(x):=\{(x_1,\cdots,x_n) \in \{0,1\}^n|\exists (x_{n+1},x_{n+2},\cdots) : x=\sum_{k=1}^{\infty}x_k\beta^{-k}\}
\]
and studied the quantity $\mathcal N_n(x;\beta):=|\mathcal E^n_{\beta}(x)|$. We demonstrated that, if one understands how $\mathcal N_n(x;\beta)$ grows for typical $x$ as $n\to\infty$, then one can say whether the corresponding Bernoulli convolution is absolutely continuous. In particular, if the function
\[
\liminf_{n\to\infty}\left(\frac{\beta}{2}\right)^n\mathcal N_n(x;\beta)
\]
has positive integral then $\nu_{\beta}$ is absolutely continuous. We were able to give an explicit formula for $\mathcal N_n(x;\beta)$ in terms of $K_{\beta}$:
\begin{equation}\label{eq11}
\mathcal N_n(x;\beta)=\int_{\{0,1\}^{\mathbb N}}2^{h(\omega,x,n)}dm
\end{equation}
where $m$ is the $(\frac{1}{2},\frac{1}{2})$ Bernoulli measure on $\Omega$ and
\[
h(\omega,x,n):=\#\{i \in \{1,\cdots, n\}: K_{\beta}^i(\omega,x) \in \Omega\times[\frac{1}{\beta},\frac{1}{\beta(\beta-1)}]\}.
\]
However we were only able to use the above formula to get a lower bound for the growth rate of $\mathcal N_n(x;\beta)$, we were able to show that
\begin{equation}\label{eq22}
\liminf_{n\rightarrow\infty} \dfrac{\log (\mathcal N_n(x;\beta))}{n}\geq \log(2)\mu_{\beta}(S).
\end{equation}
Using our formula for the density of $\mu_{\beta}$ we can get explicit bounds on $\mu_{\beta}(S)$, and hence lower bounds on the growth rate of $\mathcal N_n(x;\beta)$, but these are not strong enough to ascertain whether a given Bernoulli convolution is absolutely continuous or not. There are however some natural questions which we can ask.

{\bf Question 3:} The ergodic theory taking one from equation \ref{eq11} to the inequality \ref{eq22} is rather crude, can one combine the work in this article on $\mu_{\beta}$ with central limit theorems and information on higher moments for $K_{\beta}$ to improve inequality \ref{eq22}?

{\bf Question 4:} Do the values of $\beta$ at which the function $\beta\to\mu_{\beta}$ is not weak$^*$ continuous have any significance in the study of Bernoulli convolutions?

\section*{Acknowledgements}
Many thanks to Karma Dajani for the many interesting discussions relating the construction of our natural extension. This work was supported by the Dutch Organisation for Scientific Research (NWO) grant number 613.001.022.

\bibliographystyle{plain} 
\bibliography{betaref.bib}

\end{document}